\def\ii {{\mathrm i}}
\def\OO{{\mathcal O}}
\def\C {{\mathbb C}}
\def\Q {{\mathbb Q}}
\def\Z {{\mathbb Z}}
\def\P {{\mathbb P}}
\def\FPP {{\P^2_{fake}}}
\theoremstyle{definition}
\newtheorem{theorem}{Theorem}[section]
\newtheorem{proposition}[theorem]{Proposition}
\newtheorem{notation}[theorem]{Notation}
\newtheorem{remark}[theorem]{Remark}
\numberwithin{equation}{section}
\begin{document}
\title{Finding equations of the fake projective plane  $(C18,p=3,\{2I\})$}

\begin{abstract}
We find explicit equations of a new pair of fake projective planes, labeled by  $(C18,p=3,\{2I\})$ in the Cartwright-Steger classification. Our method involves starting with known equations of a commensurable fake projective plane $(C18,p=3,\emptyset,d_3 D_3)$ and working through a chain of cyclic covers and quotients to get to the new one. 
\end{abstract}

\author{Lev Borisov}
\address{Department of Mathematics\\
Rutgers University\\
Piscataway, NJ 08854} \email{borisov@math.rutgers.edu}

\author{Bojue Wang}
\address{Department of Mathematics\\
Rutgers University\\
Piscataway, NJ 08854} \email{bw391@math.rutgers.edu}

\maketitle

\tableofcontents
\section{Introduction.}\label{intro}
Complex projective algebraic surfaces $X$ are classified according to their Kodaira dimension $\kappa(X)$. The case $\kappa(X)=2$, when $X$ is a surface of general type, is arguably the least understood. Among such surfaces one is especially interested in those with small Hodge numbers, in particular $h^{1,0}(X)=h^{2,0}(X)=0$, see for example the review \cite{BCPsurvey}. These are further separated by the square of their canonical class $K_X^2$, and the extreme case $K_X^2=9$ occurs for the so-called fake projective planes (from here on called FPPs), which are characterized by having the same Hodge numbers as the usual projective plane $\C\P^2$.

\medskip
First example of an FPP was given by Mumford in \cite{Mumford}, using the method of $2$-adic uniformization. Research over the next several decades by multiple authors (see in particular \cite{CS,PY,Ishida,Keum-7,KeumQ,KK,Kl}) led to the full classification of all FPPs as quotients of the complex $2$-ball $\mathcal B^2$ by explicit finitely presented arithmetic groups in \cite{CS}. There are exactly $100$ FPPs up to isomorphism, gathered into $50$ pairs of complex conjugate surfaces. 

\medskip 
Unfortunately, a ball quotient description does not directly lead to any explicit equations of an FPP in its embedding into a projective space.
First such equations of an FPP in its bicanonical embedding into $\C\P^9$ were found in \cite{B-Keum}. Since then, L.B. and multiple coauthors have found explicit equations of ten more complex conjugate pairs of FPPs \cite{B21,BBF,BF,BJL}. The current paper continues this program by finding explicit equations of an FPP labeled by $(C18,p=3,\{2I\})$ in the Cartwright-Steger classification.  Our method involves starting from known equations of a commensurable FPP and using it to produce the equations of a new FPP. Even though it is similar to the method of \cite{BF} and \cite{BJL}, we had to overcome significant computational difficulties that arose because of the large degree of the common Galois cover of the two FPPs.

\subsection{General description of the process.}
The FPP indexed by $(C18,p=3,\{2I\})$ is known to be commensurable to the FPP indexed by 
$(C18,p=3,\emptyset,d_3 D_3)$ in Cartwright-Steger list whose equations were found in \cite{BBF} several years ago.
For the remainder of the paper, we will adopt the following notation.

\begin{notation}
We write $\FPP$ for the fake projective plane in the pair $(C18,p=3,\emptyset,d_3 D_3)$ whose equations were found in \cite{BBF}.
We write  $\widehat\FPP$ for the commensurable plane in the class $(C18,p=3,\{2I\})$ whose equations we find in this paper.
\end{notation}

The following results follow from the computations of Cartwright and Steger and additional GAP \cite{GAP}  calculations we did in the file GAPdataAll of \cite{BW+}.
The fake projective plane  $\FPP$ has an automorphism group $C_3\times C_3$ and $\widehat\FPP$ is a non-Galois degree $9$ cover of 
$\FPP/C_3\times C_3$. In the other direction, there is a surface which we denote by $72.\FPP$ which is a common Galois cover of $\FPP$ and $\widehat\FPP$.

\begin{proposition}
There is a surface $72.\FPP$ with an automorphism group $G_{648}$ of order $648 = 2^3 3^4$ which is isomorphic to the direct product of 
$C_3$ and the semidirect product of $SL(2,\Z/3\Z)$ and $C_3\times C_3$ (with the canonical action of the former on the latter)
$$
G_{648} \cong  C_3 \times \left( SL(2,\Z/3\Z) \ltimes (C_3\times C_3)\right).
$$
The fake projective planes $\FPP$ and 
 $\widehat\FPP$ are the quotients of $72.\FPP$ by the normal subgroup of  $G_{648}$ of order $72$
$$
G_{72}=\{1\}\times  \left( Q_8 \ltimes (C_3\times C_3)\right)
$$
and the nonnormal subgroup of  $G_{648}$ of order $72$
$$
 \widehat G_{72}=C_3\times  \left( SL(2,\Z/3\Z)  \times \{1\} \right),
$$
respectively.
Here $Q_8$ is the normal 2-Sylow subgroup of $SL(2,\Z/3\Z)$, isomorphic to the quaternion group.
\end{proposition}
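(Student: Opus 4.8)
I will realize every surface in the statement as a ball quotient $\mathcal B^2/\Delta$ for an explicit finite‑index subgroup $\Delta$ of a fixed arithmetic lattice, so that the whole proposition collapses to an assertion about one finite quotient group. The starting point is the Cartwright–Steger data for the commensurability class $(C18,p=3)$: the fundamental groups $\Pi$ and $\widehat\Pi$ of $\FPP$ and $\widehat\FPP$ are realized as explicit torsion‑free lattices in $\mathrm{PU}(2,1)$ with $\FPP=\mathcal B^2/\Pi$ and $\widehat\FPP=\mathcal B^2/\widehat\Pi$. Write $\Pi'=N_{\mathrm{PU}(2,1)}(\Pi)$ for the normalizer of $\Pi$; this is a lattice containing $\Pi$ with $\Pi'/\Pi\cong C_3\times C_3=\mathrm{Aut}(\FPP)$, so $\FPP/(C_3\times C_3)=\mathcal B^2/\Pi'$. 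The input I take from the Cartwright–Steger computations together with the GAP file GAPdataAll of \cite{BW+} is that $\widehat\Pi$ sits inside $\Pi'$ as a subgroup of index $9$ that is \emph{not} normal — equivalently, $\widehat\FPP$ is a non‑Galois degree‑$9$ cover of $\FPP/(C_3\times C_3)$.

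Next I would set
$$
N \;=\; \text{the normal core of } \Pi\cap\widehat\Pi \text{ in } \Pi',
$$
i.e.\ the largest subgroup of $\Pi\cap\widehat\Pi$ normal in $\Pi'$ (the intersection of all $\Pi'$‑conjugates of $\Pi\cap\widehat\Pi$), and define $72.\FPP:=\mathcal B^2/N$. Since $N\subseteq\Pi$ and $\Pi$ is torsion‑free, $N$ is torsion‑free, so $72.\FPP$ is a smooth ball‑quotient surface of general type. Because $N$ is normal in $\Pi'$ and contained in both $\Pi$ and $\widehat\Pi$, the maps $72.\FPP\to\FPP=(72.\FPP)/(\Pi/N)$ and $72.\FPP\to\widehat\FPP=(72.\FPP)/(\widehat\Pi/N)$ are both Galois, exhibiting $72.\FPP$ as a common Galois cover. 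At this point everything is a computation with the finite group $\Pi'/N$, carried out in GAP: one checks that $[\Pi':N]=648$, so that, writing $G_{648}=\Pi'/N$, $G_{72}=\Pi/N$ and $\widehat G_{72}=\widehat\Pi/N$, the two covering groups have order $648/9=72$; that $G_{72}$ is normal in $G_{648}$ while $\widehat G_{72}$ is not (the latter reflecting that $\widehat\Pi$ is not normal in $\Pi'$); and that there is an isomorphism $G_{648}\cong C_3\times\bigl(SL(2,\Z/3\Z)\ltimes(C_3\times C_3)\bigr)$ for the canonical action, under which $G_{72}$ corresponds to $\{1\}\times\bigl(Q_8\ltimes(C_3\times C_3)\bigr)$ and $\widehat G_{72}$ corresponds to $C_3\times\bigl(SL(2,\Z/3\Z)\times\{1\}\bigr)$.

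It remains to see that $G_{648}$ is the \emph{full} automorphism group of $72.\FPP$. Here I would use that $\mathrm{Aut}(72.\FPP)=N_{\mathrm{PU}(2,1)}(N)/N$ for the torsion‑free ball‑quotient $\mathcal B^2/N$; since $N$ is normal in $\Pi'$ this normalizer contains $\Pi'$, and if it were strictly larger it would be a lattice $M\supsetneq\Pi'$ normalizing $N$, so $\mathcal B^2/M$ would be a quotient orbifold of $\mathcal B^2/\Pi'=\FPP/(C_3\times C_3)$ of strictly smaller covolume still lying in the commensurability class $(C18,p=3)$. One rules this out using the Cartwright–Steger enumeration of lattices (equivalently, of covolumes) in this class, concluding $N_{\mathrm{PU}(2,1)}(N)=\Pi'$ and hence $\mathrm{Aut}(72.\FPP)=G_{648}$.

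The conceptual content is modest: essentially everything is bookkeeping with the Cartwright–Steger presentations. Accordingly, I expect the main obstacle to be exactly that bookkeeping — the groups $\Pi$, $\widehat\Pi$, $\Pi'$ come with complicated finite presentations, so computing the normal core $N$, verifying $[\Pi':N]=648$ and the containments $N\subseteq\Pi$, $N\subseteq\widehat\Pi$, and then pinning down the isomorphism type of $G_{648}$ together with the images of the two order‑$72$ subgroups, are heavy machine computations rather than one clean argument. A secondary subtlety is the final step, excluding automorphisms of $72.\FPP$ arising from a larger lattice in the class, which genuinely needs the classification of maximal arithmetic subgroups and their covolumes.
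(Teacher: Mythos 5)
Your proposal is correct and takes essentially the same route as the paper, whose one-line proof delegates the entire statement to a GAP computation on the Cartwright--Steger lattice presentations; the construction $72.\FPP = \mathcal B^2/N$ with $N$ the normal core of $\Pi\cap\widehat\Pi$ in $\Pi' = N_{\mathrm{PU}(2,1)}(\Pi)$, followed by the identification of $\Pi'/N$, $\Pi/N$, $\widehat\Pi/N$, is exactly the bookkeeping that computation carries out. Your additional argument for $\mathrm{Aut}(72.\FPP)=G_{648}$ via maximality of $\Pi'$ in the Prasad--Yeung/Cartwright--Steger enumeration is the standard one and is not spelled out in the paper, which simply reports the GAP result.
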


\begin{proof}
This is a result of the GAP computation in GAPdataAll, see \cite{BW+}.
\end{proof}

In this paper 
we consider the following diagram of surfaces and Galois covers.
\begin{align*}
& \hskip 50pt 72.\FPP \\
&\hskip 30pt\swarrow\hskip 40pt \searrow\\
&8.\FPP       \hskip 60pt            9.\widehat\FPP\\
&\hskip 10pt\downarrow       \hskip 15pt  \searrow   \hskip 80pt        \searrow\\
&4.\FPP \hskip 15pt 8.\FPP /C_3    \hskip 40pt            \widehat\FPP\\
&\hskip -10pt \swarrow \hskip 10pt\downarrow    \hskip10pt\searrow \\
&\hskip -35pt 2.\FPP \hskip 5pt 2.\FPP  \hskip 5pt  2.\FPP\\
&\hskip -10pt \searrow \hskip 10pt\downarrow    \hskip10pt\swarrow \\
&\hskip 12 pt \FPP
\end{align*}
Here the surface $8.\FPP$ is the quotient of $72.\FPP$ by the normal subgroup
$$
\{1\}\times  \left( \{1\} \ltimes (C_3\times C_3)\right)
$$
of $G_{648}$. This surface is an unramified $Q_8$ Galois cover of $\FPP$
and $4.\FPP$ and $2.\FPP$ correspond to the center and three index two subgroups of $Q_8$, respectively. Three different covers 
$2.\FPP\to \FPP$ correspond to $2$-torsion line bundles on $\FPP$ that are permuted by an order three automorphism of $\FPP$.
The surface $8.\FPP /C_3$ is a singular quotient of $8.\FPP$ by the image of the central $C_3$ of $G_{648}$ (the first $C_3$ factor). It is used in intermediate calculations to get enough points on $72.\FPP$ with high accuracy. The surface $9.\widehat\FPP$ is the quotient of $72.\FPP $
 by the 2-Sylow subgroup of $G_{648}$
$$
 \{1\}\times  \left( Q_8 \ltimes \{1\}\right).
$$
It is a $C_3\times C_3$ unramified Galois cover of  $\widehat\FPP$, which we used to simplify the equations of $\widehat\FPP$.

\medskip
The method of the paper is to start with the known equations of $\FPP$, make our way up to $72.\FPP$ and then take invariants to get down to 
 $\widehat\FPP$. We start with equations of $\FPP$ found in \cite{BBF}. They describe the image of $\FPP$ in its bicanonical embedding into $\C\P^9$ as being cut out by $84$ cubic equations in $10$ variables. The coefficients (in $\Z[\sqrt{-2}]$) are about $100$ digits long. There is an explicit $C_3\times C_3$ action on $\C\P^9$ that gives automorphism of $\FPP$.

\medskip
{\bf Step 1.} We find a nonreduced linear cut on  $\FPP$  which corresponds to the square of a $2$-torsion element of the Picard group of $\FPP$.
This is done by first running an exhaustive search modulo $73$, then lifting the nonreduced cut it to $73$-adics to high enough accuracy and finally recognizing the resulting coefficients as algebraic numbers.

\medskip
{\bf Step 2.} We use the nonreduced linear cut above to simplify the equations of $\FPP$ by picking a better basis of $H^0(\FPP, 2K)$. We also choose to make it a basis of $C_3\times C_3$ eigenvectors. The downside is that the equations now have coefficients in $\Q(\sqrt{-2},\sqrt{-3})$ which will be the case for most of the process.

\medskip
{\bf Step 3.} We use the same cut to find equations of the double cover $2.\FPP$ in its putative bicanonical embedding into $\C\P^{19}$. It is cut out by $100$ quadratic equations in $20$ variables.\footnote{We do not actually verify that it is an embedding, but it is highly likely}
We also use the known automorphism group of $\FPP$ to construct $4.\FPP$. We do not try to compute the equations of its putative bicanonical embedding, as there would be too many of them, but we develop a way of constructing points of its image in $\C\P^{39}$ with high accuracy. 

\medskip
{\bf Step 4.} We construct the double cover $8.\FPP$ of $4.\FPP$. In fact, we first construct its quotient $8.\FPP/C_3$ which is likely the image of the canonical map $8.\FPP\to \C\P^6$.\footnote{Again, we do not actually verify that this is the image, but it factors through it.} It is given by $4$ cubic and $58$ degree four equations in $7$ variables. We also find the action of $Q_8$ on 
 $8.\FPP/C_3$. We then find a way of constructing points with high accuracy in what is likely the bicanonical embedding of $8.\FPP$ into $\C\P^{79}$.

\medskip
{\bf Step 5.} In what is arguably the most delicate part of the calculation, the $C_3\times C_3$ cover $72.\FPP \to 8.\FPP$ is determined by finding a relation among certain bicanonical sections on  $8.\FPP$.  As a result, we find a basis of the $71$-dimensional space $H^0(72.\FPP,K)$ in terms of algebraic functions in $H^0(8.\FPP,2K)$ and $H^0(8.\FPP,K)$. We also find the values of the elements of this basis  on some random points, with very high accuracy. We find the action of $G_{648}$ on this $71$-dimensional space, in particular we find the action of the subgroup
$$
 \widehat G_{72}=C_3\times  \left( SL(2,\Z/3\Z)  \times \{1\} \right).
$$
We compute the linear invariants of the $Q_8$ action on the dimension $71$ space to get points in the putative canonical embedding of 
$9.\widehat\FPP$ into $\C\P^7$.  We also compute the quadratic invariants of the action of $G_{72}$. We use $C_3\times C_3$ invariant products of elements of $H^0(9.\widehat\FPP,K)$   to compute special curves on $\widehat \FPP$.

\medskip
{\bf Step 6.} Looking at the intersections of the above special curves, we find a basis of $H^0(\widehat\FPP,2K)$ where the equations are defined over the smaller field $\Q(\sqrt{-2})$ 
and  have smaller coefficients.

\medskip
{\bf Step 7.} We use the usual methods to verify that the equations we obtain  indeed cut out a fake projective plane.

\subsection{Disclaimers, acknowledgements and further directions.}
We describe most of our surfaces in terms of multiple points in the images of  their maps into a projective space, computed with high accuracy (hundreds to thousands of decimal digits).
Thus our intermediate calculations cannot be deemed fully rigorous, which necessitates an eventual verification that the surface we obtain is indeed an FPP in a bicanonical embedding.
We have use GAP, Magma and Mathemaica \cite{GAP, Mag, Mat} with most of the computations performed in the latter.

\medskip
We believe that our results may allow us to eventually compute all three remaining pairs of FPPs that are commensurable to $\FPP$ and $\widehat\FPP$. They are labeled by
$(C18,p=3,\{2\},D_3)$,  $(C18,p=3,\{2\},(dD)_3)$ and $(C18,p=3,\{2\},(d^2D)_3)$ in Cartwright-Steger classification.

\section{Technical details: group-theoretic calculations.} 
In this section we collect the results of GAP calculations and related computations of characters of finite group representations.

\begin{proposition}\label{torsion}
The torsion subgroups of the Picard groups of the covers of fake projective planes above are given by the following table.
\begin{equation}
\begin{array}{c|c|c|c|c|c}
\FPP&4.\FPP &8.\FPP &72.\FPP&9.\widehat\FPP&\widehat\FPP\\
\hline&&&&&\\[-.5em]
     C_2^2\times C_{13} &   C_2^8\times C_3^2 &    C_3^2\times C_{13}         & C_2^8\times C_3\times C_{13}           &    C_2^3\times  C_3 \times C_{13}              & C_2 \times C_3^2
\end{array}
\end{equation}
\end{proposition}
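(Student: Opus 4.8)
The plan is to reduce the assertion to a finite group-theoretic computation, namely the determination of abelianizations of suitable finite-index subgroups of a lattice in the relevant commensurability class.

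\emph{Step 1: from $\mathrm{Pic}$-torsion to $H_1$.} Each of the six surfaces $Y$ in the table is a smooth compact quotient $\mathcal B^2/\Gamma_Y$ by a torsion-free cocompact lattice $\Gamma_Y$, hence an Eilenberg--MacLane space $K(\Gamma_Y,1)$; in particular $H_1(Y,\Z)\cong\Gamma_Y^{\mathrm{ab}}$. By the universal coefficient theorem the torsion subgroup of $H^2(Y,\Z)$ is isomorphic to $\mathrm{Ext}^1_{\Z}(H_1(Y,\Z),\Z)$, which equals $H_1(Y,\Z)$ as soon as the latter is finite. Moreover every torsion class in $H^2(Y,\Z)$ maps to $0$ in $H^2(Y,\OO_Y)$, hence lifts along the exponential sequence to an element of $\mathrm{Pic}(Y)$ which is again torsion; and when $q(Y)=0$ the map $\mathrm{Pic}(Y)\to H^2(Y,\Z)$ is injective (its kernel $\mathrm{Pic}^0(Y)$ is trivial). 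Combining, $\mathrm{Pic}(Y)_{\mathrm{tors}}\cong H^2(Y,\Z)_{\mathrm{tors}}\cong H_1(Y,\Z)\cong\Gamma_Y^{\mathrm{ab}}$, provided $\Gamma_Y^{\mathrm{ab}}$ is finite; and finiteness of $\Gamma_Y^{\mathrm{ab}}$ is equivalent to $q(Y)=b_1(Y)/2=0$, so this is the only hypothesis to monitor. Thus it suffices to compute the six groups $\Gamma_Y^{\mathrm{ab}}$ and to observe that each comes out finite.

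\emph{Step 2: identifying the lattices and computing.} By the preceding Proposition, $72.\FPP=\mathcal B^2/\Gamma$ with $\Gamma$ normal in a lattice $\Lambda\subset PU(2,1)$ and $\Lambda/\Gamma\cong G_{648}$, so that $\mathcal B^2/\Lambda=\FPP/(C_3\times C_3)$. The lattices of the remaining five surfaces are then the preimages in $\Lambda$ of the subgroups of $G_{648}$ prescribed by the covering diagram: $G_{72}$ for $\FPP$, $\widehat G_{72}$ for $\widehat\FPP$, the $2$-Sylow $Q_8\ltimes\{1\}$ for $9.\widehat\FPP$, the subgroup $\{1\}\ltimes(C_3\times C_3)$ for $8.\FPP$, and its center and index-two subgroups for $4.\FPP$ and $2.\FPP$. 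Feeding the Cartwright--Steger presentation of $\Lambda$, together with generators of these explicit subgroups, into GAP and running coset enumeration followed by a Smith normal form computation produces each $\Gamma_Y^{\mathrm{ab}}$; in every case the output is the finite group listed in the table, which a posteriori confirms $q(Y)=0$ and thereby completes the proof. This is precisely the computation recorded in the file \texttt{GAPdataAll} of \cite{BW+}.

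\emph{Main obstacle.} The content of Step 1 is entirely standard, so the work is concentrated in Step 2. The delicate point is to pin down the correct finite-index subgroups inside the Cartwright--Steger lattice, in particular the non-normal subgroup $\widehat G_{72}$ giving $\widehat\FPP$, for which one cannot simply pass to a quotient group; after that, one must carry out abelianization for subgroups of index up to $648$ in a finitely presented group, where the size of the presentation and the cost of coset enumeration are the practical bottleneck rather than any conceptual difficulty. Once the subgroups are correctly described, the abelianization computations are routine and the homological translation above is immediate.
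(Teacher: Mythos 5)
Your proposal is correct and follows essentially the same route as the paper: the paper's proof is simply the line ``Computed by GAP,'' referring to the abelianization computations in the file \texttt{GAPdataAll}, and your Step 1 merely spells out the standard homological reduction ($\mathrm{Pic}(Y)_{\mathrm{tors}}\cong H^2(Y,\Z)_{\mathrm{tors}}\cong\Gamma_Y^{\mathrm{ab}}$ for smooth ball quotients with $q=0$) that the authors take for granted. Your Step 2 description of which finite-index subgroups of the Cartwright--Steger lattice to abelianize matches the covering diagram set up in the paper's introduction.
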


\begin{proof}
Computed by GAP.
\end{proof}

Recall that there is a divisor class $H$ on $\FPP$ so that its canonical class is $K=3H$. We abuse the notation and use $K$ and $3H$ interchangeably throughout the rest of the paper to denote the canonical line bundles (or divisor classes) on various surfaces. Note that the corresponding line bundles come with a natural linearization with respect to the automorphism group of the surface.

\begin{proposition}\label{2.2}
We have the following dimensions of the spaces of global sections of the canonical and the bicanonical invertible sheaves $\OO(3H)$ and $\OO(6H)$ on the surfaces mentioned above. 
$$
\begin{array}{l|c|c}
&\dim_\C H^0(\cdot, 3H)& \dim_\C H^0(\cdot, 6H)\\
\hline&&\\[-.8em]
\FPP,\widehat\FPP& 0 & 10 \\[.4em]
2.\FPP & 1 & 20 \\[.4em]
4.\FPP & 3 & 40 \\[.4em]
8.\FPP & 7 & 80 \\[.4em]
8.\FPP/C_3 & 7 & 32\\[.4em]
72.\FPP & 71 & 720 \\[.2em]
9.\widehat\FPP & 8 & 90 
\end{array}
$$
\end{proposition}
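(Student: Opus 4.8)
The plan is to compute each dimension using the Riemann–Roch theorem on surfaces together with Kodaira vanishing, exploiting the fact that all the surfaces in the diagram are smooth ball quotients (hence minimal surfaces of general type with ample canonical class) except for $8.\FPP/C_3$, which requires a separate argument. For a smooth surface $Y$ of general type with $K_Y$ ample and $\chi(\OO_Y)=1$, Riemann–Roch gives $\chi(\OO_Y(nK_Y)) = 1 + \frac{1}{2} n(n-1) K_Y^2$, and since $nK_Y$ is ample for $n\ge 1$ we have $h^1(nK_Y)=h^2(nK_Y)=0$ by Kodaira vanishing, so $h^0(nK_Y) = 1 + \frac{1}{2} n(n-1) K_Y^2$. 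I would first record that $K_{\FPP}^2 = 9$ and $\chi(\OO_{\FPP})=1$ by definition of an FPP, and that $\chi$ and $K^2$ are multiplicative under unramified covers: a degree $d$ unramified cover $Y\to\FPP$ has $\chi(\OO_Y)=d$ and $K_Y^2 = 9d$. The covers $2.\FPP,4.\FPP,8.\FPP,72.\FPP,9.\widehat\FPP,\widehat\FPP$ (the last of degree $1$ over itself, with $K^2=9$) are all unramified over $\FPP$ or $\widehat\FPP$ of degrees $2,4,8,72,8,1$ respectively, since ball quotients by torsion-free lattices have no ramification.

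Next I would feed these invariants into the formula. For the canonical series $n=1$: $h^0(K_Y) = \chi(\OO_Y) - 1 = d-1$, giving $0,1,3,7,71,7$ for $\FPP/\widehat\FPP$, $2.\FPP$, $4.\FPP$, $8.\FPP$, $72.\FPP$, $9.\widehat\FPP$ — wait, $9.\widehat\FPP$ has degree $9$ over $\widehat\FPP$, so $h^0(K)=8$, matching the table; and $72.\FPP$ has degree $72$ over either FPP, giving $h^0(K)=71$. For the bicanonical series $n=2$: $h^0(2K_Y) = 1 + K_Y^2 = 1 + 9d$, giving $10, 19, \dots$ — here I must be careful, since the table lists $20, 40, 80, 720, 90$ for $2.\FPP, 4.\FPP, 8.\FPP, 72.\FPP, 9.\widehat\FPP$. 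The resolution is that for these covers $h^0(2K_Y)$ should be $1+9d$ only if $\chi(\OO_Y)=1$; but in fact $\chi(\OO_Y)=d$, so the correct Riemann–Roch output is $h^0(2K_Y) = \chi(\OO_Y) + K_Y^2 = d + 9d = 10d$. Thus $h^0(2K) = 10d = 20, 40, 80, 720, 90$ for degrees $d=2,4,8,72,9$, and $h^0(K)=\chi(\OO_Y)-1 = d-1 = 1,3,7,71,8$, exactly reproducing every entry except the $8.\FPP/C_3$ row.

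For $8.\FPP/C_3$ the surface is the quotient of $8.\FPP$ by an order-$3$ automorphism, which acts with isolated fixed points (since a nontrivial holomorphic automorphism of a ball quotient surface has isolated fixed points), so $8.\FPP/C_3$ has cyclic quotient singularities. I would compute its invariants by the equivariant (Atiyah–Bott / holomorphic Lefschetz) approach: $h^0(8.\FPP/C_3, nK) = h^0(8.\FPP, nK)^{C_3}$ equals the multiplicity of the trivial character in the $C_3$-representation $H^0(8.\FPP, nK)$, which by the holomorphic Lefschetz fixed point formula is $\frac{1}{3}\big(\chi(\OO_{8.\FPP}(nK)) + \sigma + \bar\sigma\big)$ where $\sigma$ is the sum of local contributions at the fixed points of a generator and its square. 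For $n=1$ the automorphism acts trivially on $H^0(K)=H^0(8.\FPP,K)$ exactly when... in any case the table value $h^0(K)=7$ for $8.\FPP/C_3$ is forced if the $C_3$ action on the $7$-dimensional $H^0(8.\FPP,K)$ is trivial, and $h^0(2K)=32$ is the dimension of the invariant subspace of the $80$-dimensional $H^0(8.\FPP,2K)$. I expect the main obstacle to be pinning down the fixed-point data (number of fixed points and their local weights) of the central $C_3\subset G_{648}$ acting on $8.\FPP$; this is exactly the sort of thing extracted from the GAP computation of $G_{648}$ and its action, so I would cite \cite{BW+} for the fixed-point count and then the holomorphic Lefschetz formula finishes it. The unramified-cover rows are routine Riemann–Roch; the singular quotient row is where the real content lies.
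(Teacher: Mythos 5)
Your computation for the unramified covers is essentially the paper's argument, but there are two issues. First, the claimed use of Kodaira vanishing at $n=1$ is not correct: Kodaira vanishing kills $H^i(K_Y+L)$ for $L$ ample and $i>0$, so it applies to $2K_Y = K_Y + K_Y$ but not to $K_Y$ itself. Indeed, if both $h^1(K_Y)$ and $h^2(K_Y)$ vanished you would get $h^0(K_Y)=\chi(K_Y)=\chi(\OO_Y)=d$, contradicting the value $d-1$ you correctly write down; the $-1$ comes from $h^2(K_Y)=h^0(\OO_Y)=1$, and the missing piece is an argument that $h^1(K_Y)=h^1(\OO_Y)=q(Y)=0$. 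The paper supplies this by observing that the relevant fundamental groups have finite abelianization (Proposition~\ref{torsion} in the paper lists the torsion subgroups of the Picard groups, which records exactly this), so $h^{1,0}=0$ for all the covers in question. Without that input the canonical column is not actually established.

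Second, and more substantively, your treatment of $8.\FPP/C_3$ is only a plan, not a proof: you correctly identify that a holomorphic Lefschetz computation is needed but defer the fixed-point data to ``cite \cite{BW+}.'' The paper instead uses a concrete argument: Keum's result that the central $C_3$ acting on $\FPP$ has exactly $3$ fixed points of type $\frac13(1,2)$, together with the observation that on each unramified double cover $2.\FPP\to\FPP$ the action on the two preimages of a fixed point must be trivial (so the fixed-point count doubles at each step, giving $24$ on $8.\FPP$). Since each fixed point contributes identically to $\sum_i(-1)^i\mathrm{Tr}(g,H^i(\cdot,3H))$ and this sum is $1$ for $\FPP$, it equals $8$ for $8.\FPP$, forcing all of $H^0(8.\FPP,3H)$ to be $C_3$-invariant (hence $h^0(8.\FPP/C_3,3H)=7$), while the $k\bmod 3$ dependence of the Lefschetz number yields the $32+24+24$ eigenspace decomposition of the $80$-dimensional $H^0(8.\FPP,6H)$. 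You should carry out (or at least state) this fixed-point bootstrapping rather than leaving it as an expectation; as written, the singular-quotient row, which you correctly flag as the real content, is not proved.
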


\begin{proof}
The case of $8.\FPP/C_3$ is special since this surfaces is singular, and it also does not cover a fake projective plane. We will treat it last.

\medskip
For an $n$-fold unramified cover $X$ of a fake projective plane, we have $\chi(X, kH) = \frac n2(k-1)(k-2)$ by the Riemann-Roch theorem. Since $K_X=3H$ is ample, for $k>3$ Kodaira vanishing theorem assures that $h^0(X,kH)=\chi(X, kH)$. Thus, $h^0(X,6H)=10 n$ which gives the values of the right column of the above table. Similarly, we have 
\begin{align*}
&h^0(X,3H) = \chi(3H) - h^2(X,3H) + h^1(X,3H) \\
&=  n - h^{2,2}(X) + h^{1,2}(X) = n - 1 + h^{1,0}(X).
\end{align*}
Since the fundamental groups of the above surfaces have finite abelianization by Proposition \ref{torsion}, we have $h^{1,0}(X) = 0$. This implies the values in the middle  column of the above table.

\medskip
The dimensions of $H^0(8.\FPP/C_3 ,3H)$ and $H^0(8.\FPP/C_3 ,6H)$ are the dimensions of the spaces of invariants for the central $C_3$ action on $8.\FPP$. Note that the action of the generator $g$ of this $C_3$ on $\FPP$ has $3$ fixed points of type $\frac 13(1, 2)$, see \cite{KeumQ}. 
Since $g$ also acts on $2.\FPP$, its action on the two-point preimage of every fixed point on $\FPP$ must be trivial, so $g$ has $6$ fixed points of the same type on $2.\FPP$. Similarly, its action on $4.\FPP$ has $12$ fixed points and its action on $8.\FPP$ has $24$ fixed points. Each of these singular points has the same contribution to $\sum_{i=0}^2(-1)^i {\mathrm {Tr}}(g, H^i(X, 3H))$ in the the Holomorphic Lefschetz formula. We know that this sum is $1$ for $X=\FPP$, therefore it is equal to $8$ for $X=8.\FPP$, i.e.
$$
\displaystyle\sum_{i=0}^2(-1)^i {\mathrm {Tr}}(g, H^i(8.\FPP, 3H)) = 8.
$$
Consequently, all sections of $H^0(8.\FPP,3H)$ are invariant with respect to the action of $g$, and the canonical map of $8.\FPP$ factors through the quotient  surface $8.\FPP/C_3$. Since $\sum_{i=0}^2(-1)^i {\mathrm {Tr}}(g, H^i(8.\FPP, kH))$ 
depends only on $k\hskip -4pt\mod 3$, we see that the action of $g$ on the $80$-dimensional space $H^0(8.\FPP,6H)$ has invariant subspace of dimension $32$ and two spaces of dimension $24$ each of eigenvalues $e^{\pm 2\pi \ii/3}$.
\end{proof}

In what follows, it will be important for us  to understand the representation of $G_{648}$ on the dimension $71$ space $H^0(72.\FPP,3H)$. 
According to GAP \cite{GAP}, the character table of $G_{648}$ is given 
by
$$
\hskip -4pt
\tiny
\renewcommand{\arraystretch}{0.9}
\setlength{\arraycolsep}{0.6pt}
\begin{array}{l | rrrrrrrrrrrrrrrrrrrrrrrrrrrrrr}
 &1a &3a &2a &4a & 3b & 3c& 6a & 3d & 3e &6b &3f & 3g&  6c& 12a & 3h&  3i& 6d & 3j & 3k &6e& 3l & 3m & 6f& 12b & 3n & 3o& 6g & 3p & 3q& 6h\\
 \hline
\chi_1&      1&  1&  1&  1&   1&   1&  1&   1&   1&  1&  1&   1&   1&   1&   1&   1&  1&   1&   1&  1&  1&   1&   1&   1&   1&   1&  1&   1&   1&  1\\
\chi_2&      1&  1&  1&  1&   1&   1&  1&   1&   1&  1&  \alpha&   \alpha&   \alpha&   \alpha&   \alpha&   \alpha&  \alpha&   \alpha&   \alpha&  \alpha& \bar\alpha&  \bar\alpha&  \bar\alpha&  \bar\alpha&  \bar\alpha&  \bar\alpha& \bar\alpha&  \bar\alpha&  \bar\alpha& \bar\alpha\\
\chi_3 &     1&  1&  1&  1&   1&   1&  1&   1&   1&  1& \bar\alpha&  \bar\alpha&  \bar\alpha&  \bar\alpha&  \bar\alpha&  \bar\alpha& \bar\alpha&  \bar\alpha&  \bar\alpha& \bar\alpha&  \alpha&   \alpha&   \alpha&   \alpha&   \alpha&   \alpha&  \alpha&   \alpha&   \alpha&  \alpha\\
\chi_4  &    1&  1&  1&  1&   \alpha&   \alpha&  \alpha&  \bar\alpha&  \bar\alpha& \bar\alpha&  1&   1&   1&   1&   \alpha&   \alpha&  \alpha&  \bar\alpha&  \bar\alpha& \bar\alpha&  1&   1&   1&   1&   \alpha&   \alpha&  \alpha&  \bar\alpha&  \bar\alpha& \bar\alpha\\
\chi_5  &    1&  1&  1&  1&  \bar\alpha&  \bar\alpha& \bar\alpha&   \alpha&   \alpha&  \alpha&  1&   1&   1&   1&  \bar\alpha&  \bar\alpha& \bar\alpha&   \alpha&   \alpha&  \alpha&  1&   1&   1&   1&  \bar\alpha&  \bar\alpha& \bar\alpha&   \alpha&   \alpha&  \alpha\\
\chi_6  &    1&  1&  1&  1&   \alpha&   \alpha&  \alpha&  \bar\alpha&  \bar\alpha& \bar\alpha&  \alpha&   \alpha&   \alpha&   \alpha&  \bar\alpha&  \bar\alpha& \bar\alpha&   1&   1&  1& \bar\alpha&  \bar\alpha&  \bar\alpha&  \bar\alpha&   1&   1&  1&   \alpha&   \alpha&  \alpha\\
\chi_7  &    1&  1&  1&  1&  \bar\alpha&  \bar\alpha& \bar\alpha&   \alpha&   \alpha&  \alpha& \bar\alpha&  \bar\alpha&  \bar\alpha&  \bar\alpha&   \alpha&   \alpha&  \alpha&   1&   1&  1&  \alpha&   \alpha&   \alpha&   \alpha&   1&   1&  1&  \bar\alpha&  \bar\alpha& \bar\alpha\\
\chi_8&      1&  1&  1&  1&   \alpha&   \alpha&  \alpha&  \bar\alpha&  \bar\alpha& \bar\alpha& \bar\alpha&  \bar\alpha&  \bar\alpha&  \bar\alpha&   1&   1&  1&   \alpha&   \alpha&  \alpha&  \alpha&   \alpha&   \alpha&   \alpha&  \bar\alpha&  \bar\alpha& \bar\alpha&   1&   1&  1\\
\chi_9  &    1&  1&  1&  1&  \bar\alpha&  \bar\alpha& \bar\alpha&   \alpha&   \alpha&  \alpha&  \alpha&   \alpha&   \alpha&   \alpha&   1&   1&  1&  \bar\alpha&  \bar\alpha& \bar\alpha& \bar\alpha&  \bar\alpha&  \bar\alpha&  \bar\alpha&   \alpha&   \alpha&  \alpha&   1&   1&  1\\
\chi_{10} &    2&  2& -2&  .&  -1&  -1&  1&  -1&  -1&  1&  2&   2&  -2&   .&  -1&  -1&  1&  -1&  -1&  1&  2&   2&  -2&   .&  -1&  -1&  1&  -1&  -1&  1\\
\chi_{11}&     2&  2& -2&  .&  -\alpha&  -\alpha&  \alpha& -\bar\alpha& -\bar\alpha& \bar\alpha&  2&   2&  -2&   .&  -\alpha&  -\alpha&  \alpha& -\bar\alpha& -\bar\alpha& \bar\alpha&  2&   2&  -2&   .&  -\alpha&  -\alpha&  \alpha& -\bar\alpha& -\bar\alpha& \bar\alpha\\
\chi_{12}&     2&  2& -2&  .& -\bar\alpha& -\bar\alpha& \bar\alpha&  -\alpha&  -\alpha&  \alpha&  2&   2&  -2&   .& -\bar\alpha& -\bar\alpha& \bar\alpha&  -\alpha&  -\alpha&  \alpha&  2&   2&  -2&   .& -\bar\alpha& -\bar\alpha& \bar\alpha&  -\alpha&  -\alpha&  \alpha\\
\chi_{13}&     2&  2& -2&  .&  -\alpha&  -\alpha&  \alpha& -\bar\alpha& -\bar\alpha& \bar\alpha& \bar\beta&  \bar\beta& -\bar\beta&   .& -\bar\alpha& -\bar\alpha& \bar\alpha&  -1&  -1&  1&  \beta&   \beta&  -\beta&   .&  -1&  -1&  1&  -\alpha&  -\alpha&  \alpha\\
\chi_{14} &    2&  2& -2&  .& -\bar\alpha& -\bar\alpha& \bar\alpha&  -\alpha&  -\alpha&  \alpha&  \beta&   \beta&  -\beta&   .&  -\alpha&  -\alpha&  \alpha&  -1&  -1&  1& \bar\beta&  \bar\beta& -\bar\beta&   .&  -1&  -1&  1& -\bar\alpha& -\bar\alpha& \bar\alpha\\
\chi_{15} &    2&  2& -2&  .& -\bar\alpha& -\bar\alpha& \bar\alpha&  -\alpha&  -\alpha&  \alpha& \bar\beta&  \bar\beta& -\bar\beta&   .&  -1&  -1&  1& -\bar\alpha& -\bar\alpha& \bar\alpha&  \beta&   \beta&  -\beta&   .&  -\alpha&  -\alpha&  \alpha&  -1&  -1&  1\\
\chi_{16} &    2&  2& -2&  .&  -\alpha&  -\alpha&  \alpha& -\bar\alpha& -\bar\alpha& \bar\alpha&  \beta&   \beta&  -\beta&   .&  -1&  -1&  1&  -\alpha&  -\alpha&  \alpha& \bar\beta&  \bar\beta& -\bar\beta&   .& -\bar\alpha& -\bar\alpha& \bar\alpha&  -1&  -1&  1\\
\chi_{17} &    2&  2& -2&  .&  -1&  -1&  1&  -1&  -1&  1& \bar\beta&  \bar\beta& -\bar\beta&   .&  -\alpha&  -\alpha&  \alpha&  -\alpha&  -\alpha&  \alpha&  \beta&   \beta&  -\beta&   .& -\bar\alpha& -\bar\alpha& \bar\alpha& -\bar\alpha& -\bar\alpha& \bar\alpha\\
\chi_{18}&     2&  2& -2&  .&  -1&  -1&  1&  -1&  -1&  1&  \beta&   \beta&  -\beta&   .& -\bar\alpha& -\bar\alpha& \bar\alpha& -\bar\alpha& -\bar\alpha& \bar\alpha& \bar\beta&  \bar\beta& -\bar\beta&   .&  -\alpha&  -\alpha&  \alpha&  -\alpha&  -\alpha&  \alpha\\
\chi_{19} &    3&  3&  3& -1&   .&   .&  .&   .&   .&  .&  3&   3&   3&  -1&   .&   .&  .&   .&   .&  .&  3&   3&   3&  -1&   .&   .&  .&   .&   .&  .\\
\chi_{20} &    3&  3&  3& -1&   .&   .&  .&   .&   .&  .&  \gamma&   \gamma&   \gamma&  -\alpha&   .&   .&  .&   .&   .&  .& \bar\gamma&  \bar\gamma&  \bar\gamma& -\bar\alpha&   .&   .&  .&   .&   .&  .\\
\chi_{21}&     3&  3&  3& -1&   .&   .&  .&   .&   .&  .& \bar\gamma&  \bar\gamma&  \bar\gamma& -\bar\alpha&   .&   .&  .&   .&   .&  .&  \gamma&   \gamma&   \gamma&  -\alpha&   .&   .&  .&   .&   .&  .\\
\chi_{22}&     8& -1&  .&  .&  -1&   2&  .&  -1&   2&  .&  8&  -1&   .&   .&  -1&   2&  .&  -1&   2&  .&  8&  -1&   .&   .&  -1&   2&  .&  -1&   2&  .\\
\chi_{23}&     8& -1&  .&  .& -\bar\alpha&   \beta&  .&  -\alpha&  \bar\beta&  .&  8&  -1&   .&   .& -\bar\alpha&   \beta&  .&  -\alpha&  \bar\beta&  .&  8&  -1&   .&   .& -\bar\alpha&   \beta&  .&  -\alpha&  \bar\beta&  .\\
\chi_{24} &    8& -1&  .&  .&  -\alpha&  \bar\beta&  .& -\bar\alpha&   \beta&  .&  8&  -1&   .&   .&  -\alpha&  \bar\beta&  .& -\bar\alpha&   \beta&  .&  8&  -1&   .&   .&  -\alpha&  \bar\beta&  .& -\bar\alpha&   \beta&  .\\
\chi_{25} &    8& -1&  .&  .& -\bar\alpha&   \beta&  .&  -\alpha&  \bar\beta&  .&  \delta&  -\alpha&   .&   .&  -1&   2&  .& -\bar\alpha&   \beta&  .& \bar\delta& -\bar\alpha&   .&   .&  -\alpha&  \bar\beta&  .&  -1&   2&  .\\
\chi_{26} &    8& -1&  .&  .&  -\alpha&  \bar\beta&  .& -\bar\alpha&   \beta&  .& \bar\delta& -\bar\alpha&   .&   .&  -1&   2&  .&  -\alpha&  \bar\beta&  .&  \delta&  -\alpha&   .&   .& -\bar\alpha&   \beta&  .&  -1&   2&  .\\
\chi_{27} &    8& -1&  .&  .&  -1&   2&  .&  -1&   2&  .&  \delta&  -\alpha&   .&   .&  -\alpha&  \bar\beta&  .&  -\alpha&  \bar\beta&  .& \bar\delta& -\bar\alpha&   .&   .& -\bar\alpha&   \beta&  .& -\bar\alpha&   \beta&  .\\
\chi_{28}&     8& -1&  .&  .&  -1&   2&  .&  -1&   2&  .& \bar\delta& -\bar\alpha&   .&   .& -\bar\alpha&   \beta&  .& -\bar\alpha&   \beta&  .&  \delta&  -\alpha&   .&   .&  -\alpha&  \bar\beta&  .&  -\alpha&  \bar\beta&  .\\
\chi_{29} &    8& -1&  .&  .&  -\alpha&  \bar\beta&  .& -\bar\alpha&   \beta&  .&  \delta&  -\alpha&   .&   .& -\bar\alpha&   \beta&  .&  -1&   2&  .& \bar\delta& -\bar\alpha&   .&   .&  -1&   2&  .&  -\alpha&  \bar\beta&  .\\
\chi_{30} &    8& -1&  .&  .& -\bar\alpha&   \beta&  .&  -\alpha&  \bar\beta&  .& \bar\delta& -\bar\alpha&   .&   .&  -\alpha&  \bar\beta&  .&  -1&   2&  .&  \delta&  -\alpha&   .&   .&  -1&   2&  .& -\bar\alpha&   \beta&  .\\
\end{array}
$$
where we have 
$\alpha = w^2,~
\beta 
  = 2w ,~
\gamma 
  = 3w^2,~
\delta 
  =  8 w^2
$ for $w=e^{2\pi\ii/3}$.

\begin{proposition}\label{weights71}
The character of the representation of  $G_{648}$ on  the space $H^0(72.\FPP,3H)$ is given by 
\begin{equation}\label{char71}
\chi_{11}+\chi_{12}+\chi_{19} + \chi_{23} +  \chi_{24} +  \chi_{25} +  \chi_{26} + 
 \chi_{27} +  \chi_{28} +  \chi_{29} +  \chi_{30}.
\end{equation}
\end{proposition}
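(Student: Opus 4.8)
The plan is to determine the $71$-dimensional character $\chi$ of the $G_{648}$-representation $H^0(72.\FPP,3H)$ by evaluating it on every conjugacy class and matching with the character table above.

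I would first reduce to a fixed-point computation. Since $3H=K$ and $72.\FPP$ is an unramified cover of a fake projective plane, Serre duality gives $H^1(72.\FPP,3H)\cong H^1(72.\FPP,\OO)^\vee=0$ and $H^2(72.\FPP,3H)\cong H^0(72.\FPP,\OO)^\vee\cong\C$ with trivial $G_{648}$-action, exactly as in the proof of Proposition \ref{2.2}. Hence $\chi(g)=L(g)-1$ for all $g\in G_{648}$, where $L(g)=\sum_i(-1)^i\mathrm{Tr}(g,H^i(72.\FPP,3H))$ is the holomorphic Lefschetz number. Now $72.\FPP\to\FPP$ is étale with Galois group $G_{72}$, so $G_{72}$ acts freely and every point stabilizer in $G_{648}$ injects into $G_{648}/G_{72}\cong C_3\times C_3$; thus point stabilizers are elementary abelian $3$-groups and no element of order $2,4,6,12$ has a fixed point. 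For such $g$ one gets $L(g)=0$ and $\chi(g)=-1$. For $g$ of order $3$, each fixed point of $g$ lies over a fixed point of an order-$3$ automorphism of $\FPP$; since $H^0(\FPP,3H)=0$ the holomorphic Lefschetz number of such an automorphism of $\FPP$ equals $1$ while its topological Lefschetz number equals $3$ (it acts trivially on $H^2(\FPP,\C)$), forcing all three of its fixed points to be of type $\tfrac13(1,2)$; as $72.\FPP\to\FPP$ is a local isomorphism near each fixed point, the fixed points of $g$ on $72.\FPP$ are again of type $\tfrac13(1,2)$, each contributing $\frac1{(1-w)(1-w^2)}=\frac13$ to $L(g)$. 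So $\chi(g)=\tfrac13\#\mathrm{Fix}_{72.\FPP}(g)-1$, where $\#\mathrm{Fix}_{72.\FPP}(g)$ equals $|C_{G_{72}}(g)|$ times the number of fixed points of the image of $g$ on $\FPP$ over which $g$ actually has a fixed point --- a twisted-conjugacy count in the $G_{72}$-torsor fibers that I would read off from the GAP file GAPdataAll of \cite{BW+}.

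To shorten the bookkeeping I would exploit Proposition \ref{2.2} directly. Passing to $G_{72}$-invariants recovers $H^0(\FPP,3H)=0$, so $\chi$ contains none of the nine linear characters $\chi_1,\dots,\chi_9$ (these being precisely the irreducibles trivial on $G_{72}=[G_{648},G_{648}]$). Passing to invariants under $\{1\}\times(\{1\}\ltimes(C_3\times C_3))$ recovers $H^0(8.\FPP,3H)$, a $7$-dimensional $C_3\times SL(2,\Z/3\Z)$-representation on which the central $C_3$ acts trivially (because $\dim H^0(8.\FPP/C_3,3H)=7$ as well) and whose $Q_8$-invariants vanish (equal to $H^0(\FPP,3H)=0$); this forces that summand to be $\chi_{19}$ together with two of $\chi_{10},\chi_{11},\chi_{12}$. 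Passing to invariants under the $2$-Sylow $\{1\}\times(Q_8\times\{1\})$ recovers $H^0(9.\widehat\FPP,3H)$, an $8$-dimensional $C_3\times C_3$-representation with no trivial summand (equal to $H^0(\widehat\FPP,3H)=0$), hence the sum of all eight nontrivial characters; since a short Clifford-theory computation shows that passing to $2$-Sylow invariants maps the nine $8$-dimensional irreducibles of $G_{648}$ bijectively onto the nine characters of this quotient, exactly one of them --- necessarily one with trivial central twist, i.e. one of $\chi_{22},\chi_{23},\chi_{24}$ --- is absent from $\chi$ and the other eight occur once. Only a short finite list of candidates then remains, and evaluating $\chi$ on one well-chosen order-$3$ class (e.g. $\chi(3b)$, which the Lefschetz count gives as $\tfrac13\cdot 9-1=2$) singles out $\chi_{11}+\chi_{12}$ and the omission of $\chi_{22}$, giving \eqref{char71}.

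The step I expect to be the main obstacle is the analysis of the seventeen order-$3$ classes: their centralizers in $G_{72}$ really do vary in size (the order-$3$ elements of $SL(2,\Z/3\Z)$ being unipotent, not semisimple), and the twisted-conjugacy counts entering $\#\mathrm{Fix}_{72.\FPP}(g)$ --- equivalently, the final datum needed to pick the correct character out of the finite list above --- is exactly where the argument has to lean on the explicit GAP computations rather than on a closed-form identity.
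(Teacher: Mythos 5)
Your proposal is built on the same core inputs as the paper's proof: the vanishings $H^1(72.\FPP,3H)=0$ and $H^2(72.\FPP,3H)\cong\C$ with trivial action, the holomorphic Lefschetz formula, the free action of the deck group of an unramified cover, and a final identification against the $G_{648}$ character table. Where you diverge is in packaging. The paper observes that \emph{both} $G_{72}$ and $\widehat G_{72}$ act freely on $72.\FPP$, so by Lefschetz the restriction of $H^0\oplus H^2$ to each of these order-$72$ subgroups is the regular representation, i.e.\ the character restricts to ``regular minus trivial'' on both; GAP plus a short Mathematica search then verify that \eqref{char71} is the \emph{unique} $G_{648}$-character with this restriction behavior. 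You instead assemble a chain of invariant-subspace dimension counts (quoting Proposition~\ref{2.2}), vanishing of all linear characters, and a single fixed-point evaluation at an order-$3$ class. Two of your steps are asserted rather than derived: (i) that passing to $Q_8$-invariants sends the nine $8$-dimensional irreducibles of $G_{648}$ bijectively onto the nine characters of $\widehat G_{72}/Q_8\cong C_3\times C_3$, and (ii) that an element in class $3b$ has exactly $9$ fixed points on $72.\FPP$. Point (ii) is the weak link: you propose to read the number $9$ off from GAP, but the GAP file doesn't contain it as primary data, and inferring it from the answer would be circular. It is easy to avoid: you already have $\chi(g)=-1$ for every element of order $6$ (no fixed points, since stabilizers inject into $C_3\times C_3$), and on class $6a$ the characters $\chi_{19}$ and all $\chi_{22},\dots,\chi_{30}$ vanish while $\chi_{10},\chi_{11},\chi_{12}$ take values $1,\alpha,\bar\alpha$; so $\chi(6a)=-1$ forces the pair $\{11,12\}$ out of your three candidates, with no fixed-point count needed. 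That leaves only the choice of which one of $\chi_{22},\chi_{23},\chi_{24}$ to omit, and resolving that is precisely the content of your Clifford-theory claim (i); making it rigorous requires restricting the $8$-dimensional characters to $\widehat G_{72}$, which is exactly the GAP calculation the paper's proof invokes. So the proposal is sound in outline, but it does not actually avoid the computer step --- it recasts it --- and, as written, rests on a fixed-point count you have not established, though the $6a$-constraint makes that particular appeal unnecessary.
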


\begin{proof}
By the Holomorphic Lefschetz Formula, together with the trivial representation of  $G_{648}$ on $H^2(72.\FPP,3H)$, our representation must restrict to the regular representation both for $G_{72}$ and for $\widehat G_{72}$, since every nonidentity element of this group has no fixed points. We use GAP (GAPdataAll) to compute the restrictions 
of the characters to these groups and then Mathematica (Dim71rep.nb) to find the unique linear combination of characters of $G_{648}$ that has this property.
\end{proof}

\begin{proposition}\label{weights7}
The dimension $7$ subspace of $H^0(72.\FPP,3H)$ with character $\chi_{11}+\chi_{12}+\chi_{19}$ can be naturally identified with 
$$H^0(8.\FPP,3H) \cong H^0(8.\FPP/C_3,3H).$$ Similarly, $H^0(4.\FPP,3H)$ can be identified with $\chi_{19}$. 
\end{proposition}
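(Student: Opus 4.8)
The plan is to recognize both identifications as instances of descent of global sections along an \'etale Galois cover, and then to pin down the relevant subrepresentations by an elementary count of irreducible dimensions against the character table above.

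First I would set up the two covers. By the description in Section~\ref{intro}, $8.\FPP$ is the quotient of $72.\FPP$ by the normal subgroup
$$
N=\{1\}\times\bigl(\{1\}\ltimes(C_3\times C_3)\bigr)\subset G_{648}
$$
of order $9$, while $4.\FPP=8.\FPP/Z(Q_8)$ is the quotient of $72.\FPP$ by
$$
M=\{1\}\times\bigl(\{\pm1\}\ltimes(C_3\times C_3)\bigr)\subset G_{648}
$$
of order $18$; since $\{\pm1\}=Z(SL(2,\Z/3\Z))$, the subgroup $M$ is normal in $G_{648}$. Both $N$ and $M$ lie in $G_{72}$, which acts freely on $72.\FPP$ (as noted in the proof of Proposition~\ref{weights71}), so the quotient maps $72.\FPP\to 8.\FPP$ and $72.\FPP\to 4.\FPP$ are \'etale; in particular they carry $3H$ to $3H$. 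Pullback along a finite Galois cover identifies the global sections of a line bundle downstairs with the invariants of its pullback, so $H^0(8.\FPP,3H)\cong H^0(72.\FPP,3H)^{N}$ and $H^0(4.\FPP,3H)\cong H^0(72.\FPP,3H)^{M}$, compatibly with the residual actions of $G_{648}/N$ and $G_{648}/M$.

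Next I would compute these invariant subspaces inside the representation \eqref{char71}. For a normal subgroup the invariant subspace of a $G_{648}$-module is the sum of the irreducible constituents on which that subgroup acts trivially, i.e.\ those factoring through the quotient group. Now $G_{648}/N\cong C_3\times SL(2,\Z/3\Z)$ has order $72$, and since $SL(2,\Z/3\Z)$ has irreducible representations of dimensions $1,1,1,2,2,2,3$, every irreducible of $G_{648}/N$ has dimension in $\{1,2,3\}$. As $G_{648}$ has $30$ irreducibles and $G_{648}/N$ has $21$, exactly $21$ of the $\chi_i$ are trivial on $N$; the other nine have squared dimensions summing to $648-72=576=9\cdot 8^2$, hence are precisely the nine $8$-dimensional characters $\chi_{22},\dots,\chi_{30}$. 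Therefore the $N$-trivial part of \eqref{char71} is $\chi_{11}+\chi_{12}+\chi_{19}$, of dimension $7$. Similarly $G_{648}/M\cong C_3\times PSL(2,\Z/3\Z)\cong C_3\times A_4$ has order $36$ with irreducible dimensions $1,1,1,3$; since the commutator subgroup of $G_{648}$ is $G_{72}$, every one-dimensional character of $G_{648}$ is trivial on $M\subset G_{72}$, accounting for $\chi_1,\dots,\chi_9$, and the three remaining $M$-trivial characters have dimension $3$, hence are $\chi_{19},\chi_{20},\chi_{21}$. Thus the $M$-trivial part of \eqref{char71} is just $\chi_{19}$, of dimension $3$. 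These totals agree with $\dim_\C H^0(8.\FPP,3H)=7$ and $\dim_\C H^0(4.\FPP,3H)=3$ from Proposition~\ref{2.2}, which both confirms the bookkeeping and yields the stated identifications.

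Finally, the equality $H^0(8.\FPP,3H)\cong H^0(8.\FPP/C_3,3H)$ is exactly what the proof of Proposition~\ref{2.2} establishes: the generator of the central $C_3\subset G_{648}$ acts trivially on $H^0(8.\FPP,3H)$ (this is the Holomorphic Lefschetz computation that gave the value $8$), so pullback along $8.\FPP\to 8.\FPP/C_3$ is an isomorphism onto $H^0(8.\FPP/C_3,3H)$, both spaces being $7$-dimensional. The only places where real care is needed are purely group-theoretic: verifying that $M$ is normal in $G_{648}$ (so that $4.\FPP$ is a global quotient and the invariant space is $G_{648}$-stable), that $N$ and $M$ act freely (so the covers are \'etale and preserve $3H$), and that the abstract isomorphism types of $G_{648}/N$ and $G_{648}/M$, together with the computation of the commutator subgroup, match the dimension pattern of the $\chi_i$ in the GAP table. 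None of this needs an idea beyond the data already in Section~\ref{intro} and Propositions~\ref{torsion}, \ref{2.2} and~\ref{weights71}; the main obstacle, such as it is, is simply carrying out these identifications of subgroups and quotients carefully.
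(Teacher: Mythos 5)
Your argument is correct, and its conclusion matches the paper's, but the route is genuinely different. The paper's proof is a direct reading of the character table: since the covering group $N = C_3\times C_3 \subset G_{72}$ consists of the identity together with elements in class $3a$, a character $\chi_i$ from the list \eqref{char71} is trivial on $N$ exactly when $\chi_i(3a)=\chi_i(1a)$, which singles out $\chi_{11},\chi_{12},\chi_{19}$; then $\chi_{19}$ is the one among those also satisfying $\chi_i(2a)=\chi_i(1a)$, since $2a$ is the central involution of $Q_8$ and $M/N=\{\pm 1\}$. You instead avoid reading individual character values and argue structurally: you identify $G_{648}/N\cong C_3\times SL(2,\Z/3\Z)$ and $G_{648}/M\cong C_3\times A_4$, use their irreducible-dimension spectra to bound which $\chi_i$ can factor through them, and close the count with $\sum\dim^2=|G|$ and the abelianization $G_{648}/G_{72}\cong C_3\times C_3$. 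Both are sound; the paper's is shorter and more table-driven, while yours is more self-contained (it would survive a mislabeling of conjugacy classes in the table) and makes the role of the quotient group structure explicit. Your explicit checks that $N,M\subset G_{72}$ act freely, that $M$ is normal, and that the final dimensions $7$ and $3$ agree with Proposition~\ref{2.2} are welcome extra care that the published proof leaves implicit.
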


\begin{proof}
The map  $72.\FPP\to 8.\FPP$ is a Galois cover, so $G_{648}$ acts on the space of holomorphic $2$-forms on $8.\FPP$, and the pullback map is compatible with the action. The subgroup $G_{72}$ is the kernel of the abelianization map $G_{648}\to C_3\times C_3$ and is thus built from the conjugacy classes $1a$, $2a$, $3a$ and $4a$. Then $\chi_{11}$, $\chi_{12}$ and $\chi_{19}$ are characterized by the property that they are invariant under $3a$, i.e. invariant under the normal subgroup $C_3\times C_3$ of $G_{72}$. Then $\chi_{19}$ is further characterized by having trivial action of the conjugacy class $2a$, which is the central involution of $Q_8$.
\end{proof}

\section{Technical details: computing the equations.}

In this section we comment  in more detail on the technical issues encountered in our process, as sketched in the Introduction.

\subsection{Step 1.}
Let $s$ be a nonzero element of $H^0(2.\FPP,3H)$. Then $s^2\in H^0(2.\FPP,6H)$ is invariant under the covering involution of the double cover $2.\FPP\to \FPP$ and is thus a pullback of an element of $H^0(\FPP,6H)$. Moreover, this section must be invariant with respect to the action of the central $C_3$. So to find $s^2$, we looked for nonreduced linear cuts of $\FPP$ in its bicanonical embedding, which are invariant under an action of a subgroup of its automorphism group (we did not a priori know which subgroup of ${\rm Aut}(\FPP)$ came from the central $C_3$). We first found such nonreduced cut modulo $73$ by a brute force search using Magma.\footnote{Unfortunately, $73$ was the smallest prime of the form $9k+1$ where the equations of \cite{BBF} gave a reduced surface with the correct Hilbert polynomial, and it took a considerable amount of time to go through all of the cases.} This calculation was entirely similar to the one in \cite{B21,BJLM,BL}. As a result,
we got 
\begin{align}\label{cut}
\begin{array}{c}
{\rm Cut}= Q_0 + 69 (Q_1 +  Q_2 +  Q_3) + 7 (Q_4 +  Q_5 + Q_6) + 62( Q_7 + Q_8 + 
 Q_9)
\end{array}
\end{align}
where $Q_i$ were the variables of the equations of $\FPP$ from \cite{BBF}.

\medskip
Our next goal was to lift the equation of the nonreduced cut \eqref{cut} from $\Z/73\Z$ to $\Z/73^k\Z$ for increasing powers of $k$. The previous method, used in the aforementioned papers was to find some points on the nonreduced cut, and enforce the condition of the cut being singular on (lifts of) the points as $k$ grows. However, this approach was unavailable in our case because there were no points on $X$ defined over $\Z/73\Z$. While we could have presumably worked  over a finite field extension, we found the following easier alternative approach.

\medskip
We used Magma to compute the ideal of the radical of the nonreduced cut of $\FPP$ modulo $73$. One of the equations was 
\begin{align*}
\begin{array}{rl}
F &= Q_3 Q_6 + 61 Q_4 Q_6 + 29 Q_5 Q_6 + 53 Q_6^2 + 9 Q_1 Q_7 + 18 Q_2 Q_7 + 
   42 Q_3 Q_7 \\
 &  + 32 Q_4 Q_7 + 15 Q_5 Q_7 + 9 Q_6 Q_7 + 11 Q_7^2 + 25 Q_1 Q_8 + 
   3 Q_3 Q_8 + 13 Q_4 Q_8 
   \\
   &+ 18 Q_5 Q_8 + 21 Q_6 Q_8 + 11 Q_7 Q_8 + 44 Q_8^2 + 
   49 Q_1 Q_9 + 63 Q_2 Q_9 + 53 Q_3 Q_9
   \\& + 12 Q_4 Q_9 + 26 Q_5 Q_9 + 12 Q_6 Q_9 + 
   51 Q_7 Q_9 + 68 Q_8 Q_9 + 44 Q_9^2
   \end{array}
\end{align*}
in the variables $Q_i$ of the bicanonical embedding of $\FPP$ modulo $73$. Then $F^2$ was in the ideal of the cut, and we wrote 
\begin{equation}\label{syzygy}
F^2 = \sum_{i=1}^{84} H_i E_i + {\rm Cut}\,R
\end{equation}
as polynomials in $Q_0,\ldots,Q_9$. Here $E_i$ are the equations of $\FPP$ (cubic in $Q$), $H_i$ are unknown linear combinations of $Q$, and $R$ is an unknown cubic polynomial in $Q$.
We originally computed a relation \eqref{syzygy} modulo $73$ and then lifted it modulo $73^k$ for increasing powers of $k$. At each step $k\to k+1$ we had a system of linear equations modulo $73$ on the corrections to the coefficients of $H_i$, $F$ and $R$. We used Mathematica to solve it, and picked the initial solution which was automatically taking case of making some coefficients zero. We went up to $73^{30}$ which gave a good approximation to coefficients of the nonreduced cut over the complex numbers.

\medskip
There is a standard way of guessing an algebraic number from its $p$-adic approximation. We used it to see that the nonreduced cut is given by
\begin{equation}\label{cutexact}
\begin{array}{l}
Q_0 - \frac{(-773 + 16\, \ii \sqrt 2)}{66449} (Q_1 + Q_2 + Q_3) - 
  W \frac{ (-50345 - 26294\, \ii \sqrt 2)}{132898 } (Q_4 + Q_5 + Q_6) 
  \\
  - 
\frac {(50345 + 26294\,\ii \sqrt 2 )}{132898 \,W} (Q_7 + Q_8 + Q_9)
  \end{array}
\end{equation}
where $W = (\frac 13 (2 - \ii \sqrt 2 ))^{\frac 13}$. The details of the above calculation are in the Mathematica file Step1.nb.

\subsection{Step 2.} 
We work out Steps 2 and 3 in the Mathematica file Steps23.nb. By simply scaling $Q$-s by the appropriate powers of $W$, we arranged the cut of \eqref{cutexact} to be defined over $\Q(\sqrt {-2})$ with equations of $\FPP$ still defined over this field. However, it was convenient for us to enlarge the field to $\Q(\sqrt {-2},\sqrt{-3})$ and to pick a basis of eigenvectors of the $C_3\times C_3$ action on $H^0(\FPP,6H)$, with the new variables called $R_0,\ldots, R_9$. We made the cut to be $R_1+R_4+R_7$, made one of the fixed points of a $C_3$ action to be 
$$
(0: 1: 1: 1: 0: 0: 0: 0: 0: 0)
$$
and made the tangent space to the cone over $\FPP$ at this point to be generated by 
$$
(0, 1, 1, 1, 0, 0, 0, 0, 0, 0), (0, 0, 0, 0, 1, 1, 1, 0, 0, 0), (0, 0, 0, 0, 0, 0, 0, 1, 1, 1).
$$
These  conditions fixed the basis of $R_i$, and the resulting equations of $\FPP$ had very small coefficients in $\Z[\sqrt {-2},\sqrt{-3}]$.

\subsection{Step 3.} 
The torsion subgroup of the Picard group of $\FPP$ is isomorphic to $C_{13}\times C_2\times C_2$, and the nontrivial two-torsion elements are acted upon by an order $3$ automorphism of $\FPP$ which scales $R_4,R_5,R_6$ by the primitive third root of unity $w=\frac 12(-1+\ii \sqrt 3)$ 
and scales $R_7,R_8,R_9$ by $w^2$. Each $2$-torsion element in the Picard group of $\FPP$ gives a nonreduced cut of it in the bicanonical embedding, which are thus
\begin{equation}\label{eq1}
R_1+R_4+R_7,~R_1 + w R_4 + w^2 R_7,~ R_1 + w^2 R_4 + w R_7,
\end{equation}
in the $R_i$ coordinates.

\medskip
The field of fractions of the ring $\bigoplus_{k\geq 0} H^0(4.\FPP,3kH)$ is obtained from that of  $\bigoplus_{k\geq 0} H^0(\FPP,6kH)$ by attaching the square roots of the linear forms \eqref{eq1}. The sections of $H^0(2.\FPP,6H)$ of the double cover can be then obtained as pullbacks of $R_i$ and as 
$$
\frac{F(R)}{\sqrt {R_1 + w R_4 + w^2 R_7}\,\sqrt {R_1 + w^2 R_4 + w R_7}}
$$
where $F(R)$ are quadratic polynomials in $R_i$ with the property that they are zero on the loci of zeros of $R_1 + w R_4 + w^2 R_7$ and 
$R_1 + w^2 R_4 + w R_7$. These have been computed and given the names $U_0,\ldots, U_{19}$ where $U_i=R_i$ for $0\leq i\leq 9$ form a basis of the subspace of invariants of the covering involution of $2.\FPP\to \FPP$, and the $U_{10},\ldots,U_{19}$ form a basis of the $(-1)$-eigenspace. Adding the $C_3$ translates of the latter gave us a basis $U_0,\ldots, U_{39}$ of $H^0(4.\FPP,6H)$ in terms of $R_i$ and the above square roots. We also extend the action of $C_3\times C_3$ to these $U_i$. 

\begin{remark}
We computed equations of the double cover $2.\FPP$, i.e. the relations among $U_0,\ldots,U_{19}$ and got the expected dimension $100$ space of these equations. We suspect that these quadratic equations cut out  $2.\FPP$ in its bicanonical embedding, but we did not try ascertain that (and it also may be beyond the reach of our hardware). We did not use these equations later in our computations.
\end{remark}

\subsection{Step 4.}
Step 4 takes a lot of work, and it is done in Step4.nb. 

\medskip 
We first 
recall that $8.\FPP$ is acted upon
by 
$$
C_3\times SL(2,\Z/3\Z)
$$
so that the quotient by the normal $2$-Sylow subgroup $Q_8$ of $SL(2,\Z/3\Z)$ induces the automorphism action of $C_3\times C_3$ on $\FPP$.  By our construction of Step 3, we have also lifted the action of $C_3\times C_3$ to act on  $4.\FPP$, which is the quotient of $8.\FPP$ by the central involution $\sigma\in Q_8$. Holomorphic Lefschetz formula allows one to figure out the action of $C_3\times SL(2,\Z/3\Z)$ on $H^0(8.\FPP,3H)$ and there exist, unique up to scaling, two elements $s_1$ and $s_2$ of $H^0(8.\FPP,3H)$ with the following properties. 

\begin{itemize}
\item Both $s_i$ are $(-1)$-eigenfunctions for the covering involution $\sigma$ of  $8.\FPP\to 4.\FPP$.
\item Both $s_i$ are invariant with respect to the central $C_3$.
\item Both $s_i$ are eigenfunctions for the action of the other $C_3$, one with weight $w$ and the other with weight $w^2$.
\end{itemize}

Consequently, $f_0 = s_1 s_2$, $f_1=s_1^2$ and $f_2= s_2^2$ are invariant with respect to $\sigma$ and are pullbacks of elements of $H^0(4.\FPP,6H)$ (i.e. linear combinations of $U_0,\ldots,U_{39}$) that satisfy 
\begin{equation}\label{eq2}
f_0^2 = f_1 f_2
\end{equation}
and have prescribed weights with respect to the $C_3\times C_3$ action on $U_i$. This resulted in a system of $26$ quadratic equations on $16$ unknown coefficients of $f_i$. After a fortunate choice of two additional scaling equations (since $s_i$ are only up to scaling, we can scale two out of three $f_i$), Mathematica readily solved the resulting equations in $14$ variables numerically and then recognized the results as good approximations to algebraic numbers in $\Q(\sqrt{-2},\sqrt{-3})$. 

\medskip
There is a two-dimensional subspace of $H^0(8.\FPP,3H)$ which is anti-invariant with respect to $\sigma$ and is invariant with respect to both $C_3$ groups. For an element $v$ of it, we knew where $v^2$ and $v s_1$ were, which allowed us to find it. This gave us seven linearly independent elements of $H^0(8.\FPP,3H)$, namely the square roots of \eqref{eq1}, and four other of the form $F_i(U)/\sqrt{f_1}$ for a solution $f_1$ of \eqref{eq2} and a linear function $F_i$ of $U_0,\ldots, U_{39}$. We denoted this basis by $V_0,\ldots, V_6$.

\medskip
By Proposition \ref{2.2}  sections $V_0,\ldots, V_6$ of $H^0(8.\FPP,3H)$ are all invariant with respect to the action of the central $C_3$ (this can also be seen by direct examination). As a consequence, they are pullbacks of the sections of the canonical line bundle on the singular surface $8.\FPP/C_3$ with $24$ singularities of type $A_2$. 
We found equations on $V_i$, namely a $4$-dimensional space of cubic equations and a $58$-dimensional space of quartic equations. These equations allowed us to find points on
the canonical image of $8.\FPP$ (or $8.\FPP/C_3$)  with high accuracy.

\medskip
The next step was to lift the action of $C_2\times C_2$ on $4.\FPP$ to an action of $Q_8$ on $8.\FPP$.
Specifically, this meant finding an order $4$ automorphism which lifted the order $2$ automorphism of $4.\FPP$. We also knew that its action on the dimension four subspace spanned $V_3,\ldots,V_6$ was traceless, and that the action permuted the points with $V_1=V_2=0$. Taken together, this information allowed us to find the desired order $4$ automorphism.

\medskip
Our next goal was to understand the space $H^0(8.\FPP, 6H)$.
We have $\dim H^0(8.\FPP, 6H)=80$, so it would be rather useless to try to compute equations among these, since solving systems of nonlinear equations in $80$ variables is well beyond the capabilities of our available hardware. Instead, we had to settle for being able to compute a lot of points in the image  $8.\FPP\to \C\P^{79}$ with high accuracy. The approach we took was to first compute points on  
$8.\FPP/C_3$ where we do have equations and then compute the values of elements $H^0(8.\FPP,6H)$ on them.

\medskip
We observed that $R_0,R_1,R_4,R_7$ can be easily written as degree two polynomials in $V_i$. In contrast, $R_2$ is not invariant under the central $C_3$ and can therefore not be written as a rational function in $V_i$. However, $R_2^3$ can  be written as a rational function in $R_0,R_1,R_4,R_7$ (namely as a ratio of a degree $12$ polynomial and a degree $9$ polynomial), and we computed it. Similarly, we computed rational functions in these variables for $R_2R_3,R_2R_6,R_2R_9,R_2^2R_5,R_2^2R_8$. Recall that $U_i$ for $0\leq i \leq 39$ are written as rational functions in $R_i$ and $V_0,V_1,V_2$. 
Therefore, for given (high accuracy) values of $V_i$, we can find three values for $R_2$ and then find values of the rest of $R_i$ and $U_0,\ldots,U_{39}$ for each of three values of $R_2$. 

\medskip
We then computed the subspace of $H^0(8.\FPP,6H)$ of sections that are anti-invariant with respect to the covering involution $\sigma$.
We did this by considering rational functions in $V$-s and $R_2$, of total $V$-degree $2$, which are zero on the curves $V_0+V_1+V_2=0$ and $V_3=0$, divided by $(V_0+V_1+V_2)V_3$. We first got a database of points on these two curves, and then computed vanishing conditions. The calculation was performed in Step4.nb and is split into three cases according to the character of the central $C_3$. Specifically, for the trivial character, we looked for degree four polynomials in $V_i$ which vanish at the aforementioned curves. For the other characters, we looked for linear combinations of products of quadratic polynomials in $V$ with some sections of $H^0(4.\FPP,6H)$ with the same  central weight.

\medskip
Afterwards, we computed the action of the two $C_3$-s (the central one and the chosen subgroup of $SL(2,\Z/3\Z)$) on the space $H^0(4.\FPP,6H)$ of dimension $80$. We picked an eigenbasis of it, denoted by $\tilde U_0, \ldots, \tilde U_{79}$. Finally, we computed points on $8.\FPP$ with accuracy of several thousand digits, in preparation for the next step.

\subsection{Step 5.} Naturally, this is the trickiest step of the whole paper, worked out in Step5.nb.

\medskip
The map $72.\FPP\to 8.\FPP$ is a Galois cover with the covering group $C_3\times C_3$, and we have a good understanding of $H^0(72.\FPP,3H)$ by 
Proposition \ref{weights71}.  In what follows, we will denote the corresponding subspaces of $H^0(72.\FPP,3H)$ as $H^0(72.\FPP,3H)_{11},\ldots, H^0(72.\FPP,3H)_{30}$, according to the index of the irreducible character.
Note that each of the $8$-dimensional irreps of $H^0(72.\FPP,3H)_{i}$ for $23\leq i\leq 30$ has one-dimensional eigenspaces for all non-trivial characters of the covering group. Indeed, all nonzero elements of this group are in the conjugacy class $3a$ and thus have trace $(-1)$. We also observe that each of these representations is acted upon by the central involution $\sigma$ of $Q_8$ which permutes $C_3\times C_3$ eigenspaces by inverting eigenvalues, because it corresponds to $(-{\rm Id})$ in $SL(2,\Z/3\Z)$.
The following proposition is the key to our approach.

\begin{proposition}\label{key}
Consider an order $3$ element $h$ of $SL(2,\Z/3\Z)$ and its action on the $3$-torsion  subgroup $C_3\times C_3$ of ${\rm Pic}(8.\FPP)$. Suppose that the 
character $(w,1)$ of the covering $C_3\times C_3$ corresponds to the eigenvector of $h$ in $C_3\times C_3$.
Let $f_1$ and $f_2 = \sigma(f_1)$ be a $(w,1)$-eigenvector and a  $(w^2,1)$-eigenvector for the covering $C_3\times C_3$ in the space $H^0(72.\FPP,3H)_{29}$, respectively.
Likewise, let $g_1$ and $g_2=\sigma(g_1)$ be an $(w,1)$- and $(w^2,1)$-eigenvectors  in the space $H^0(72.\FPP,3H)_{30}$. Then  $s_1 = f_1 f_2$, $s_2 = f_1 g_2$, $s_3 = g_1 f_2$, $s_4 = g_1 g_2 $
are invariant under the covering group and can be thought of as elements of $H^0(8.\FPP,6H)$. These sections $s_i$ have the following properties.
\begin{itemize}
\item
$
s_1s_4 = s_2 s_3
$ 
\item 
$\sigma(s_2) = s_3$
\item
Sections $s_1,s_2, s_3,s_4$ have weights $(w,1,1,w^2)$ respectively for the central $C_3$ action on $H^0(8.\FPP,6H)$. 
\item
The weights of $s_1,s_2,s_3,s_4$ for the action of $h\in SL(2,\Z/3\Z)$ are $(w^{2a},w^{a+b},w^{a+b},w^{2b})$ for some $a$ and $b$ in $\Z/3\Z$.
\end{itemize}
\end{proposition}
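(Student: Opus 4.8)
The plan is to derive all four bullet points from representation-theoretic bookkeeping in the graded ring $\bigoplus_{k\ge 0}H^0(72.\FPP,3kH)$, on which $G_{648}$ acts, using that $72.\FPP\to 8.\FPP$ is an \'etale $C_3\times C_3$-Galois cover and that $\sigma=-\mathrm{Id}\in SL(2,\Z/3\Z)$ acts on this ring as a ring automorphism with $\sigma^2=\mathrm{id}$ (for the canonical linearization of $3H$). First one checks that each $s_i$ genuinely descends to $H^0(8.\FPP,6H)$: writing $\Gamma\cong C_3\times C_3$ for the covering group, $f_1,f_2$ carry the $\Gamma$-characters $(w,1),(w^2,1)$ and $g_1,g_2$ carry the same two characters, so each of $f_1f_2,\ f_1g_2,\ g_1f_2,\ g_1g_2$ has trivial $\Gamma$-character and descends by Galois descent. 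The relation $s_1s_4=s_2s_3$ then already holds in $H^0(72.\FPP,12H)$, since both sides equal $f_1f_2g_1g_2$ by commutativity of the ring, and it descends because pullback along the cover is injective. The identity $\sigma(s_2)=s_3$ follows because $\sigma$ is a ring automorphism with $\sigma(f_1)=f_2$ by definition and $\sigma(g_2)=\sigma^2(g_1)=g_1$, so $\sigma(s_2)=\sigma(f_1)\sigma(g_2)=f_2g_1=s_3$.

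For the central $C_3$-weights I would first identify the central $C_3\subset G_{648}$. It maps isomorphically onto one of the two $C_3$-quotients of $G_{648}^{\mathrm{ab}}$, and by Propositions~\ref{2.2} and~\ref{weights7} it acts trivially on $H^0(8.\FPP,3H)$, whose character is $\chi_{11}+\chi_{12}+\chi_{19}$; since it acts by a scalar on each irreducible, it acts trivially on each of $\chi_{11},\chi_{12},\chi_{19}$, which from the character table pins it down as the cyclic subgroup whose nonidentity elements lie in the conjugacy classes $3f$ and $3l$. By Schur's lemma the central $C_3$ acts on the irreducible constituents $H^0(72.\FPP,3H)_{29}$ and $H^0(72.\FPP,3H)_{30}$ by scalars, which the table reads off (on the generator $z\in 3f$) as $\chi_{29}(z)/8=\delta/8=w^2$ and $\chi_{30}(z)/8=\bar\delta/8=w$. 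Hence $f_1,f_2$ carry central weight $w^2$ and $g_1,g_2$ carry central weight $w$, so $s_1=f_1f_2$, $s_2=f_1g_2$, $s_3=g_1f_2$, $s_4=g_1g_2$ carry central weights $w^2\cdot w^2=w$, $w^2\cdot w=1$, $w\cdot w^2=1$, $w\cdot w=w^2$, which is exactly $(w,1,1,w^2)$.

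For the $h$-weights the key point is that $h$ preserves each of the $G_{648}$-submodules $H^0(72.\FPP,3H)_{29}$, $H^0(72.\FPP,3H)_{30}$ and, inside each, carries the $\psi$-eigenspace of the normal covering group $\Gamma$ to the $\psi^{h}$-eigenspace, where $\psi^{h}(\gamma)=\psi(h^{-1}\gamma h)$. The hypothesis that the character $(w,1)$ corresponds to an eigenvector of $h$ on the $3$-torsion of ${\rm Pic}(8.\FPP)$ says precisely that $(w,1)$ is fixed by this conjugation action of $h$; since the $(w,1)$-eigenspace is one-dimensional in each of these $8$-dimensional irreducibles (as noted before the statement, because every nonidentity element of $\Gamma$ lies in class $3a$ with trace $-1$), $h$ acts on $\C f_1$ by a scalar $w^{a}$ and on $\C g_1$ by a scalar $w^{b}$ for some $a,b\in\Z/3\Z$. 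Since $\sigma$ is central in $SL(2,\Z/3\Z)$ it commutes with $h$, so $h\cdot f_2=h\cdot\sigma(f_1)=\sigma(h\cdot f_1)=w^{a}f_2$ and likewise $h\cdot g_2=w^{b}g_2$; multiplying, $s_1,s_2,s_3,s_4$ acquire $h$-weights $w^{2a},w^{a+b},w^{a+b},w^{2b}$, which is the last bullet point.

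The step I expect to require the most care is the dictionary used in the last paragraph: one must match the covering group $\Gamma$ with the $3$-torsion subgroup of ${\rm Pic}(8.\FPP)$ via the standard identification $H^0(72.\FPP,L)^{\psi}\cong H^0(8.\FPP,L\otimes M_{\psi})$, verify that the $h$-action on $\Gamma$ is Pontryagin-dual to its action on those line bundles, and recall that an order-$3$ element of $SL(2,\Z/3\Z)$ is unipotent on $(\Z/3\Z)^2$, so that the fixed line, and hence the notion of the eigenvector of $h$, is well defined. Once this is in place, the remaining content is a mechanical weight count, and I do not anticipate a further obstacle.
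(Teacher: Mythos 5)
Your proof is correct and follows the same route as the paper: descent by trivial $\Gamma$-character, the immediate ring identities for $s_1s_4=s_2s_3$ and $\sigma(s_2)=s_3$, reading the central $C_3$-weights of $\chi_{29},\chi_{30}$ off the character table, and using that $h$ fixes the relevant Picard class to get scalar action on the one-dimensional $(w,1)$-eigenspaces of $H^0(72.\FPP,3H)_{29}$ and $H^0(72.\FPP,3H)_{30}$. You merely add detail the paper leaves implicit (pinning down the central $C_3$ via the conjugacy classes $3f,3l$, and invoking centrality of $\sigma$ to transport the $h$-eigenvalue from $f_1$ to $f_2$ and from $g_1$ to $g_2$), so this is a more fully spelled-out version of the paper's own argument rather than a different one.
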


\begin{proof}
The first two statements are immediate from the construction. To prove the third statement, observe that the generator of the central $C_3$  has trace $8w^2$
in $\chi_{29}$ and $8w$ in $\chi_{30}$ (after an appropriate choice of generator or a switch of $\chi_{29}$ and $\chi_{30}$). Thus $f_i$ have eigenvalues $w^2$ and $g_i$ have eigenvalues $w$. 

\medskip
The last statement is the most delicate. Since $h$ preserves the corresponding element of the Picard group, its action preserves the corresponding eigenspaces of $H^0(72.\FPP,3H)_{29}$ and $H^0(72.\FPP,3H)_{30}$. Thus $f_i$ and $g_i$ are eigenvectors for its action, with eigenvalues $w^a$ and $w^b$ for some $a$ and $b$.
\end{proof}

\begin{remark}\label{notspecial}
There is nothing particularly special about using $\chi_{29}$ and $\chi_{30}$ in Proposition \ref{key}. In fact, $29$ can be replaced by $25$ or $27$ and $30$ can be replaced by $26$ or $28$. Since we do not know which values of $a$ and $b$ correspond to which subrepresentations, as we get a solution $(s_1,\ldots,s_4)$ we will not know exactly which subrepresentations they come from. 
\end{remark}

For each pair of values $(a,b)$, the conditions of Proposition \ref{key} can be translated into a system of polynomial equations on the coefficients of $s_i$ in the bases of the corresponding subspaces of $H^0(8.\FPP,6H)$. The number of variables is generally under $20$, and we were able to solve one the systems. Specifically, we solved it modulo $4363$, which is a large prime for which both $(-2)$ and $(-3)$ are quadratic residues, then lifted the solution to powers of $4363$ and finally used this $p$-adic approximation of solutions to realize them as algebraic numbers.

\medskip
Getting an equation of the form $s_1 s_4 = s_2 s_3$ is indicative of some additional divisor classes, given by $(s_1,s_3)$ and $(s_1,s_4)$. We computed the corresponding divisors 
and found that a third power can be written as a section of $H^0(8.\FPP,9H)$. Specifically, we were able to write it as a degree $3$ polynomial (called \emph{goodrr} in the Mathematica file Step5.nb) in $V_0,\ldots, V_6$, since a third power is also invariant with respect to the central $C_3$. In view of Remark \ref{notspecial}, we do not know precisely which irreducible subrepresentation the corresponding section $f_1\in H^0(72.\FPP,3H)$ lies in, but it is not important to us. Indeed, we know from Proposition \ref{torsion} that unramified triple covers of $8.\FPP$ come from $72.\FPP$, and we know that by adding $f_1$, and its $Q_8$-translates to the function field of the cone over $8.\FPP$, we will get the function field of the cone over  $72.\FPP$.

\medskip
More precisely, we computed a basis of a dimension $8$ subspace of elements of $H^0(8.\FPP,6H)$ which vanish on $f_1=0$ and were thus able to describe a set of $8$ linearly independent sections in $H^0(72.\FPP,3H)$ as $R/f_1$ for $R$ in this subspace. Then $Q_8$ translates of these forms, together with (pullbacks of) $V_i\in H^0(8.\FPP,3H)$ gave the basis of  $H^0(72.\FPP,3H)$. To be able to really compute values of 
elements of $H^0(72.\FPP,3H)$ on points of $72.\FPP$ we needed to be careful in identifying values of $Q_8$-translates  $f_i$ of $f_1$. While we knew their cubes, it was not clear which cubic roots had to be taken. This issue was solved by computing products $f_i f_j f_k$ which lie in $H^0(8.\FPP,9H)$ and using the values of the products to pick correct values of all but two $f_i$ (first two $f_i$ can be taking arbitrarily, each choice giving one of the preimage points of $C_3\times C_3$ cover $72.\FPP\to 8.\FPP$).

\medskip
In order to construct the surfaces $9.\widehat\FPP$ and $\widehat\FPP$ we found a lift of the action of $C_3\times SL(2,\Z/3\Z)$ from $8.\FPP$ to $72.\FPP$ by picking lifts of the generators. We then averaged over $Q_8$ to get values of sections of $H^0(9.\widehat\FPP,3H)$, called $W_1,\ldots W_8$. We similarly averaged over $C_3\times SL(2,\Z/3\Z)$ to get a basis of  $H^0(\widehat\FPP,6H)$, called $Z_0,\ldots, Z_9$. We computed equations on $Z_i$, which were $\dim 84$ space of cubics in $Z_i$, with coefficients in $\Q(\sqrt{-2},\sqrt{-3})$. In fact, we had to assume that the equations would lie in this field and still had to use several thousand digits of accuracy in our computation of points. We also computed the values of  four pairwise products of $W_i$ which lie in $H^0(\widehat\FPP,6H)$ which gave natural reducible linear cuts of $\widehat \FPP$ in its bicanonical embedding. These were used in the next step.

\subsection{Step 6.} At this point we had putative equations of $\widehat\FPP$ but the coefficients were large and were defined over $\Q(\sqrt{-2},\sqrt{-3})$. Both of these features made working with this surface difficult. We followed a rather ad hoc process which somewhat surprisingly allowed us to take care of both issues.

\medskip
First or all, for each pair of reducible cuts, found in Step 5, we computed their $36$ intersection points on $\widehat\FPP$. We speculated that $Z_0$ had to be defined over $\Q(\sqrt{-2})$, in the sense that there is a model of $\widehat\FPP$ over this field where $Z_0$ is defined over it. We normalized the $36$ points of intersections to have $Z_0=1$. Then we separated these $36$ points according to their field of definition. We added these points to get linear combinations of the basis dual to $Z_i$ with coefficients in the field $\Q(\sqrt{-2},\sqrt{-3})$. As the pairs varied, we ended up picking $21$ such points.
We also speculated that $Z_1$ should be defined over  $\Q(\sqrt{-2})$ and used natural linear combinations of the above $21$ points to get $13$ natural points in $\C^{10}$.
We then picked $10$ linearly independent ones and used a linear change of variables so that new sections were a dual basis. The resulting equations were indeed defined over  $\Q(\sqrt{-2})$, but the coefficients were up to $8\times 10^3$ digits long. The process was further refined by picking a small $\Q(\sqrt{-2})$-linear combination of the above $10$ points defined over $\Q(\sqrt{-2},\sqrt{-3})$. That led to equations in the new variables $Y_0,\ldots Y_9$ which were in $\Q(\sqrt{-2})$ and had coefficients a few hundred digits long. Finally, we traded the number of nonzero terms for the size of the coefficients by picking linear combinations of the equations via a lattice reduction algorithm. This led to the final output where the equations were only $20$ to $30$ decimal digits long, in $\Z[\sqrt{-2}]$. It seems plausible that one can reduce the coefficients further by picking a better basis of $H^0(\widehat\FPP,6H)$, but we were unable to do so.

\medskip
The details are in the file Step6.nb.

\subsection{Step 7.} The techniques of the previous steps used probabilistic approaches and approximate calculations, and the overall complexity of the code was also formidable. Fortunately, it is possible to verify that the surface we obtained is a fake projective plane by doing exact and relatively short calculations in Magma. We can then confidently identify it as $(C18,p=3,\{2I\})$. The method of verification that the surface is an FPP has not changed much since \cite{BF}. Specifically, we first observed that the surface $S$ in question has the correct Hilbert polynomial. Then we showed that it is smooth by picking three random minors of the Jacobian matrix and checking that adding them to the equations gives zero Hilbert polynomial over a finite field. For better or for worse, we used the same minors as in \cite{BL}, and it worked. We also computed the dimension of the cohomology spaces of the structure sheaf and the first cohomology space of the cotangent bundle. This allowed us to conclude that the surface is an FPP. Then it suffices to compute, as in \cite{B-Keum}, that $h^2(X,2K_X(-1))=0$ to show that our embedding is precisely the bicanonical one. As a slight improvement over previous approaches, we did the calculations entirely in Magma, as opposed to a mix of Magma and Macaulay2. The details are in Step7Magma (Hilbert polynomial of $\widehat\FPP$ over the number field) and Step7Magma4363 (the rest).


\begin{thebibliography}{9999}
\bibitem{BCPsurvey}
I.~Bauer, F.~Catanese, R.~Pignatelli, {\em Surfaces of general type with geometric genus zero: a survey. Complex and differential geometry}, 1--48, Springer Proc. Math., 8, Springer, Heidelberg, 2011. 


\bibitem{B21}
L. Borisov,
{\em On equations of fake projective planes with automorphism group of order 21.}
\'Epijournal G\'eom. Alg\'ebrique 7 (2023), Art. 17, 19 pp.

\bibitem{BBF}
L. Borisov, A. Buch, E. Fatighenti, {\em A journey from the octonionic $\P^2$ to a fake $\P^2$}, 
Proc. Amer. Math. Soc. 150 (2022), no. 4, 1467--1475.

\bibitem{BF}
L. Borisov,  E. Fatighenti, {\em New explicit constructions of surfaces of general type}, preprint arXiv:2004.02637, to appear in Journal of Algebraic Geometry.

\bibitem{BJL}
L.~Borisov, M.~Ji, Y.~Li, 
{\em Explicit equations of the fake projective plane (a=7,p=2,$\emptyset$,D3X7).}
Comm. Algebra 53 (2025), no. 8, 3477--3485.

\bibitem{BJLM}
L.~Borisov, M.~Ji, Y.~Li, S.~Mondal,
{\em On the Geometry of a Fake} {\em Projective Plane with 21 Automorphisms.}
preprint arXiv:2308.10429, to appear in Involve.

\bibitem{B-Keum}
L. Borisov, J. Keum, {\em Explicit equations of a fake projective plane}, 
Duke Math. J. 169 (2020), no. 6, 1135--1162.


\bibitem{BL}
L.~Borisov, Z.~Lihn,
{\em Realizing a Fake Projective Plane as a Degree 25 Surface in $\mathbb P^5$.}
J. Korean Math. Soc. 61 (2024), no. 4, 683--692.

\bibitem{BW+} Auxiliary files attached to the current paper. \emph{GAPdataAll, Dim71.nb, Step1Magma, Step1.nb, Steps23.nb, Step4.nb, Step5.nb, Step6.nb, Step7Magma, Step7Magma4363, BetterNewFPPrr}.

\bibitem{CS} D. Cartwright, T. Steger,
{\em Enumeration of the 50 fake projective planes}, C. R. Acad.
Sci. Paris, Ser. I {\bf 348} (2010), 11--13.

\bibitem{GAP} 
The GAP Group, GAP -- Groups, Algorithms, and Programming, Version 4.13.1 of 2024-06-11. (http://www.gap-system.org)

\bibitem{Ishida} M.-N. Ishida, {\em An elliptic surface covered by Mumford's fake projective plane}, Tohoku Math.J. (2) {\bf 40} no. 3 (1988), 367--396.

\bibitem{Keum-7} J. Keum, {\em A fake projective plane with an order 7 automorphism}, Topology {\bf 45} (2006), 919--927.

\bibitem{KeumQ} J.~Keum, {\em Quotients of fake projective planes}, 
Geom. Topol. {\bf 12} (2008), no. 4, 2497--2515.

\bibitem{KK} V. S.  Kharlamov, V. M. Kulikov, {\em On real structures on rigid surfaces}, Izv. Russ. Akad. Nauk. Ser. Mat. {\bf 66}, no. 1, (2002),  133-152; Izv. Math. {\bf 66}, no. 1, (2002), 133--150.

\bibitem{Kl} B. Klingler, {\em Sur la rigidit\'e de certains groupes fondamentaux, l'arithm\'eticit\'e des r\'eseaux hyperboliques complexes, et les "faux plans projectifs"}, Invent. Math. {\bf 153} (2003), 105--143.

\bibitem{Mag}
W.~Bosma, J.~Cannon, C.~Playoust, {\em The Magma algebra system. I. The user language}, J. Symbolic Comput., 24 (1997), 235--265.

\bibitem{Mat}
Wolfram Research, Inc., Mathematica, Version 14.1.0, 2024.

\bibitem{Mumford} D. Mumford, {\em An algebraic surface with K ample, $K^2=9$, $p_g=q=0$}, Amer. J. Math. {\bf 101} (1979) 233--244.

\bibitem{PY} G. Prasad, S.-K. Yeung, {\em Fake projective planes}, Invent. Math. {\bf 168} (2007),
321--370; Addendum, {\bf 182 } (2010), 213--227.


\end{thebibliography}
\end{document}